\documentclass[11pt]{amsart}
\usepackage{amssymb,latexsym}

\textwidth 15.00cm
\textheight 21cm
\topmargin 0.0cm
\oddsidemargin 0.5cm
\evensidemargin 0.5cm
\parskip 0.0cm

\def \R {\Bbb R}

\def\div{{\rm div}}

\def \vs{\vspace*{0.1cm}}

\def \ds{\displaystyle}

\def\triangle{\Delta}

\def\D{\Delta}

\def \ds{\displaystyle}

\def\be1{{\begin{equation}}}
\def\ee1{{\end{equation}}}

\def\D{\Delta}

\def\C{\mathbb C}

\def\S{{\mathbb S}}

\def\part{\partial}
\def\R{{\mathbb R}}

\def\ba{\begin{array}}
\def\ea{\end{array}}

\numberwithin{equation}{section}

\newtheorem{lemma}{Lemma}[section]
\newtheorem{proposition}[lemma]{Proposition}
\newtheorem{theorem}[lemma]{Theorem}

\title{A conformal integral invariant on Riemannian foliations}


\author{Guofang Wang}
\address{ Albert-Ludwigs-Universit\"at Freiburg,
Mathematisches Institut,
Eckerstr. 1,
79104 Freiburg, Germany}
\email{guofang.wang@math.uni-freiburg.de}
\thanks{The project is supported by SFB/TR71 ``Geometric partial differential equations'' of DFG}
\author{Yongbing Zhang}\address{University of Science and Technology of China, Department of Mathematics,
           230026 Hefei, China 
           \\
\and Albert-Ludwigs-Universit\"at Freiburg, Mathematisches Institut,  Eckerstr. 1, 79104 Freiburg, Germany}
\email{ybzhang@amss.ac.cn}

\date{}

\begin{document}

\begin{abstract}
Let $M$ be a closed manifold which admits a foliation structure $\mathcal{F}$ of codimension $q\geq 2$ and a bundle-like metric $g_0$.
Let $[g_0]_B$ be the space of bundle-like metrics which differ from $g_0$
only along the horizontal directions by a multiple of a positive basic function.
Assume $Y$ is a transverse conformal vector field and the mean curvature of the leaves of $(M,\mathcal{F},g_0)$ vanishes.
We show that the integral $\int_MY(R^T_{g^T})d\mu_g$ is independent of the choice of $g\in [g_0]_B$, where $g^T$ is the transverse metric induced by $g$
and $R^T$ is the transverse scalar curvature.
Moreover if $q\geq 3$, we have $\int_MY(R^T_{g^T})d\mu_g=0$ for any $g\in [g_0]_B$.
However there exist codimension $2$ minimal Riemannian foliations $(M,\mathcal{F},g)$ and transverse conformal vector fields $Y$
such that $\int_MY(R^T_{g^T})d\mu_g\neq 0$. Therefore, it is a nontrivial obstruction for
the transverse Yamabe problem on minimal Riemannian foliation of codimension $2$.
\end{abstract}

\maketitle

\section{Introduction}

In \cite{Kazdan-Warner} Kazdan and Warner discovered an obstruction to
the existence of metrics with prescribed scalar curvature on $\S^2$.
Let $(\S^2, g_{\S^2})$ be the unit sphere in ${\R}^3$ with the standard metric and $h$  a given function on $\S^2$.
Kazdan and Warner found that if $\varphi$ a solution to the equation
\begin{equation} \label{prescribedscalar}
\D_{g_{\S^2}}\varphi+2-he^{-\varphi}=0,
\end{equation}
then for any first order spherical harmonic $F$
(i.e. the restriction to $\S^2$ of a linear function in ${\R}^3$) it holds that
\begin{equation}\label{integrabilitycondition}
\int_{\S^2} g(\nabla F,\nabla h)e^{-\varphi}d\mu_{g_{\S^2}}=0.
\end{equation}
If $g=e^{-\varphi}g_{\S^2}$ and $\varphi$ satisfies (\ref{prescribedscalar}),
the scalar curvature of ${g}$ is equal to $h$. 
Hence, (\ref{integrabilitycondition}) is just
\begin{equation}\label{obstruction1}
\int_{\S^2}(\nabla F)(R_{{g}})d\mu_{{g}}=0.
\end{equation}

Similar integrability condition as (\ref{integrabilitycondition}) was proved for higher dimensional spheres \cite{Kazdan-Warner2}.
The integrability condition (\ref{integrabilitycondition}) was generalized to any closed Riemannian manifold with a conformal vector field,
see \cite{Bourguignon,BourguignonEzin}.
Note that $\nabla F$ in (\ref{integrabilitycondition}) is a conformal vector field with respect to the standard metric on $\S^2$.
Let $(M,g_0)$ be a closed Riemannian manifold and $[g]$ be the conformal class of $g_0$.
Bourguignon and Ezin proved that for any conformal vector field $Y$
\begin{equation}\label{BouguignonEzin}
\int_{M}Y(R_{{g}})d\mu_{{g}}=0, \quad \forall \, {g}\in [g_0].
\end{equation}
By (\ref{BouguignonEzin}), they found new functions which cannot be realized as the scalar curvature of a Riemmanian metric on $\S^2$.
Remark that the proof given in \cite{BourguignonEzin} is different for dimension $n\ge 3$ and for dimension $n=2$.
There is another interesting proof given in \cite{Bourguignon}. See also \cite{Futaki2}. One  first shows that
\begin{equation}\label{int1}
\int_{M}Y(R_{{g}})d\mu_{{g}}
\end{equation}
is a conformal invariant, and then shows that this invariant vanishes by a result of Obata.
For the case that $(M,g)$ is a compact manifold with boundary, 
see \cite{Schoen}.

Before \cite{Bourguignon} and \cite{BourguignonEzin},
Futaki \cite{Futaki} found an analogous invariant for the complex Monge-Ampere equation
on K\"ahler-Einstein manifolds of positive first Chern class.
This is the well-known Futaki invariant, which 
is one of the main obstructions to the existence of K\"ahler-Einstein metrics of positive first Chern class.

Very recently we have studied a Yamabe type problem on Riemannian foliations, i.e., finding a
bundle-like metric $g$ in a given  basic conformal class with  constant
transverse scalar curvature. This is a natural generalization of the
Yamabe problem to Riemannian foliation.

Let us first briefly recall the definitions
of basic conformal class.
Let $(M,\mathcal{F},g_0)$ be a closed Riemannian foliation of codimension $q\geq 2$ 
with a bundle-like metric $g_0$.
From now on we assume that $M$ is oriented and $\mathcal{F}$ is transversally oriented.
Let $\mathcal{L}$ denote the integrable subbundle given by $\mathcal{F}$.
The bundle-like metric $g_0$ induces a holonomy invariant transverse metric $g_0^T$
on the normal bundle $\nu(\mathcal{F})=TM/ \mathcal{L}$ of the foliation.
Let $\Omega_B^0(M,\mathcal{F})$ denote the space of all basic functions and
\begin{equation}\label{basicconformalclass}
[g_0]_B=\{g=g_0|_{\mathcal{L}}+e^u g_0|_{\mathcal{L}^\perp}: u\in \Omega_B^0(M,\mathcal{F}) \}.
\end{equation}
$[g_0]_B$ is called the basic conformal class of the bundle-like metric $g_0$.
Any transverse metric of the form $g^T=e^u g_0^T$ is holonomy invariant if and only if
$u$ is a basic function.
We call a transverse metric $g^T$ conformal to $g_0^T$ if $g^T=e^u g_0^T$ for a basic function $u$.
We denote the space of all conformal transverse metrics of $g_0^T$ by $[g_0^T]_B$.
There is a one to one correspondence between $[g_0]_B$ and $[g_0^T]_B$
and we denote by $g^T$ the transverse metric induced by $g\in [g_0]_B$ .
We denote by $R^T_{g^T}$ the transverse scalar curvature of $g^T$.
A Riemannian foliation is called minimal if the mean curvature of the leaves vanishes.
For geometry of foliations, see for instance \cite{Tondeur} or Section 2 below.
For the definition of transverse vector field, see \cite{JungJung} or Section 2 below.
First, we observe that a similar integral  like (\ref{int1}) is invariant in a basic conformal class.

\begin{theorem}\label{mainthm}
Let $Y$ be a transverse conformal vector field on the minimal Riemannian foliation $(M,\mathcal{F},g_0)$.
Then the integral
\begin{equation}\label{int2}
\int_MY(R^T_{g^T})d\mu_g.
\end{equation}
 is independent of the choice of $g\in [g_0]_B$.
 \end{theorem}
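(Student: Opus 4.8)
The plan is to mimic the Riemannian argument of Bourguignon–Ezin, transplanted to the transverse setting. First I would fix $g_0\in[g_0]_B$ as a base point and parametrise an arbitrary $g\in[g_0]_B$ by a path: write $g_t$ for the bundle-like metric with $g_t^T=e^{tu}g_0^T$, so $g_1=g$ and $g_0$ is the base point, with $u\in\Omega_B^0(M,\mathcal F)$ the basic function connecting them. Since the transverse volume form and $d\mu_{g_t}$ differ from $d\mu_{g_0}$ only in the horizontal directions and $u$ is basic, one has $d\mu_{g_t}=e^{\frac q2 tu}d\mu_{g_0}$ (the leaf directions are unchanged). It then suffices to show that
\[
\frac{d}{dt}\int_M Y(R^T_{g_t^T})\,d\mu_{g_t}=0
\]
for all $t$; integrating from $0$ to $1$ gives the theorem. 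So everything reduces to a single first-variation computation along the conformal direction $u$.

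Next I would compute the two ingredients of this derivative. The conformal variation of the transverse scalar curvature under $\dot g^T=u\,g^T$ obeys the familiar Yamabe-type formula, with the transverse Laplacian $\Delta_B$ (the basic Laplacian) and the transverse Ricci/scalar curvature replacing their Riemannian counterparts: something of the shape $\dot R^T = -(q-1)\Delta_B u -\dots - R^T u$ (the precise constants I would pull from Section 2 or the standard foliation literature, e.g. Tondeur). The variation of $d\mu_{g_t}$ contributes the factor $\tfrac q2 u$ as above. Then $\frac{d}{dt}\int_M Y(R^T)\,d\mu = \int_M \big(Y(\dot R^T) + \tfrac q2 u\,Y(R^T)\big)\,d\mu$. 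Now I would integrate by parts to move $Y$ off the curvature terms; the key structural fact is that a transverse conformal vector field $Y$ satisfies $\mathcal L_Y g^T = 2\phi\,g^T$ for some basic function $\phi$, equivalently the transverse divergence of $Y$ is (up to a constant) $q\phi$, so $Y$ behaves against the transverse volume form exactly like a conformal field against an ordinary volume form. Using this, and the fact that $u$, $R^T$, $\phi$ are all basic so that the divergence theorem on $(M,\mathcal F,g_0)$ holds in the form $\int_M (\div_\nabla Z)\,d\mu=0$ for transverse fields $Z$ (here the minimality hypothesis enters crucially, since the mean-curvature form is what otherwise obstructs this), the derivative should collapse to zero after the pointwise algebraic identities cancel.

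The step I expect to be the main obstacle is controlling the divergence/integration-by-parts on the foliation. Two things have to go right simultaneously: (i) the transverse integration-by-parts formula needs the mean curvature of the leaves to vanish — this is precisely where the hypothesis that $(M,\mathcal F,g_0)$ is minimal is used, and I would need to be careful that minimality is preserved, or at least that the relevant boundary-type terms still vanish, as $t$ varies (the mean curvature form $\kappa$ transforms under basic conformal change, so I should check whether $\int_M Y(R^T_{g_t^T})\,d\mu_{g_t}$ really only sees the $g_0$-minimality or needs $g_t$-minimality — presumably $\kappa$ stays basic and the horizontal rescaling by a basic function keeps the foliation minimal, but this must be verified); and (ii) commuting $Y$ with $\Delta_B$ produces lower-order terms involving $\mathcal L_Y g^T=2\phi g^T$, and one must check these assemble into an exact transverse divergence. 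Once the pointwise identity ``$Y(\dot R^T)+\tfrac q2 u Y(R^T) = \div_\nabla(\text{something basic})$'' is established, the result follows. I would also double-check the computation is dimension-independent here (the special role of $q=2$ appears only later, in the vanishing/non-vanishing dichotomy, not in this invariance statement).
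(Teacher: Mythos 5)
Your proposal follows essentially the same route as the paper's proof: a first-variation computation along the basic conformal direction (using $R^T_{e^{s\psi}g^T}$ and $d\mu=e^{q s\psi/2}d\mu_g$ as in (\ref{eq2.3})), followed by integration by parts via the transverse divergence theorem, where minimality (which is indeed preserved, since $\tau=e^{-u}\tau_0$) kills the mean-curvature terms. The one step you leave as ``to be checked'' --- that the remaining terms cancel pointwise --- is exactly what the paper supplies by quoting the transverse Lichnerowicz-type identity $\frac{q}{2}Y(R^T)=-(q-1)(\triangle_B+\tau)\,{\div}^\nabla(\overline{Y})-R^T{\div}^\nabla(\overline{Y})$ (Lemma 2.2, from Jung--Jung), so no commutator of $Y$ with $\triangle_B$ is needed; with that identity cited, your argument is complete and coincides with the paper's.
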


Without the assumption that the Riemannian foliation $(M,\mathcal{F},g_0)$ is minimal,
$\int_MY(R^T_{g^T})d\mu_g$ may depend on $g\in [g_0]_B$.

It is easy to see that  invariant (\ref{int2}) is an obstruction of the following transverse Yamabe problem.

 \

 \noindent{\it Transverse Yamabe problem.}
 Let $(M,\mathcal{F},g_0)$ be a Riemannian foliation with a bundle-like metric $g_0$. Does there exist
 any $g^T\in [g_0^T]_B$ such that
 \[R^T_{g^T}=const.?\]

 \

 This is a natural generalization of the ordinary Yamabe problem,
 which was resolved by Yamabe, Trudinger, Aubin and finally by Schoen.
 The resolution of the Yamabe problem is a milestone of geometric analysis.
 An equivariant version of the Yamabe problem has been studied by Hebey-Vaugon \cite{HV}.
 Very recently, a fully nonlinear Yamabe type problem has been studied by Viaclovsky, Chang-Gursky-Yang
 and many other mathematicians. See a survey of Viaclovsky \cite{V0}.
 For the further study of the ordinary Yamabe problem, see a survey of Brendle and Marques \cite{BM}.

 It is clear that the invariant (\ref{int2}) is an obstruction of the transverse Yamabe problem,
 at least for minimal foliations, which is the most interesting case: if there is a solution then invariant (\ref{int2}) must vanish.
 Unlike the ordinary Yamabe problem, now invariant (\ref{int2}) is not a trivial invariant.

  \begin{theorem}\label{mainthm2} There are examples of minimal Riemannian
  foliation of codimension $q=2$ with a transverse conformal vector field $Y$ such that
 invariant (\ref{int2}) does not vanish. Hence on such a Riemannian foliation there is
 no solution for the transverse Yamabe problem.
\end{theorem}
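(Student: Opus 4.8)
The plan is to construct explicit codimension-$2$ minimal Riemannian foliations by a suspension-type or fiber-bundle construction, reducing the computation of the invariant (\ref{int2}) to a computation on the transverse surface. First I would take a closed oriented surface $(\Sigma^2,h)$ together with a Killing or conformal vector field $Y_\Sigma$ on it — the natural candidate being $\Sigma=\S^2$ with its standard metric and $Y_\Sigma=\nabla F$ a first-order spherical harmonic, exactly as in the Kazdan--Warner setting (\ref{integrabilitycondition})--(\ref{obstruction1}). I would then build $M$ as a flat bundle (suspension) over $\Sigma$ with compact leaf $L$: choose a representation $\rho\colon\pi_1(\Sigma)\to\mathrm{Isom}(L,g_L)$ into the isometry group of some closed Riemannian manifold $L$, set $M=(\widetilde\Sigma\times L)/\pi_1(\Sigma)$, and let $\mathcal F$ be the foliation whose leaves cover $\Sigma$ (the ``horizontal'' foliation transverse to the $L$-fibers is the one whose normal bundle carries the transverse geometry of $\Sigma$). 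With the product-type bundle-like metric $g=g_L\oplus h$, the leaves are totally geodesic, hence in particular minimal, so Theorem \ref{mainthm} applies; the transverse metric $g^T$ is identified with $h$ and $R^T_{g^T}$ with $R_h$, the scalar curvature of $\Sigma$.

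Next I would verify that $Y_\Sigma$ lifts to a genuine transverse conformal (indeed transverse Killing, if $Y_\Sigma$ is Killing) vector field $Y$ on $(M,\mathcal F,g)$: since the construction is a suspension, an isometry or conformal transformation of $\Sigma$ commuting appropriately with $\rho$ lifts, and its infinitesimal generator descends to a foliated vector field whose transverse part is $Y_\Sigma$. One must check the precise definition of transverse conformal field from Section 2 is met — this is where one uses that $Y_\Sigma$ is projectable and that the holonomy-invariant transverse metric transforms conformally under its flow. Granting this, the integral (\ref{int2}) becomes
\[
\int_M Y(R^T_{g^T})\,d\mu_g \;=\; \mathrm{Vol}(L,g_L)\cdot\int_\Sigma Y_\Sigma(R_h)\,d\mu_h,
\]
by Fubini, because the integrand is basic (constant along the fibers $L$).

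Finally I would exhibit a concrete surface metric $h$ on $\S^2$, conformal to the round metric, for which $\int_{\S^2}\nabla F(R_h)\,d\mu_h\neq 0$ for some linear coordinate function $F$. Such $h$ exist by the original Kazdan--Warner analysis: the obstruction (\ref{integrabilitycondition}) is \emph{not} automatically zero for an arbitrary function $h$ in the basic conformal class — only for those that actually arise as scalar curvatures of conformal metrics — so one simply picks $h=e^{-\varphi}g_{\S^2}$ and computes $\int_{\S^2}\nabla F(R_h)e^{?}$; more directly, one takes any $\varphi$ with $\int_{\S^2}\nabla F(\D_{g_{\S^2}}\varphi)e^{-\varphi}d\mu_{g_{\S^2}}\neq 0$, e.g.\ a suitable perturbation supported near one pole, and reads off a nonzero value. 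The reason $q=2$ is special (and $q\ge 3$ gives $0$, per the abstract) is that in dimension $2$ the transverse scalar curvature couples to the conformal factor through a Laplacian term with no coercive zeroth-order part, so the Obata-type rigidity that kills the invariant in higher dimensions fails — the two-dimensional case genuinely mirrors the classical $\S^2$ obstruction.

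The main obstacle I anticipate is not the surface computation, which is classical, but the foliation-theoretic bookkeeping in the second step: one must be careful that the chosen metric $g$ is genuinely bundle-like, that the leaves are minimal (totally geodesic leaves in a Riemannian product make this transparent, which is why I favor the suspension with a product metric rather than a warped one), and above all that $Y$ meets the exact definition of a transverse conformal vector field used in Theorem \ref{mainthm} — in particular that it is foliated (its flow preserves $\mathcal F$) and that $\mathcal L_Y g^T = (\operatorname{div} Y)\,g^T/q$ or the analogous normalization holds for the \emph{transverse} metric. Once the dictionary between $(M,\mathcal F,g,Y)$ and $(\Sigma,h,Y_\Sigma)$ is pinned down, the nonvanishing is immediate from Kazdan--Warner, and combining with Theorem \ref{mainthm} shows the transverse Yamabe problem has no solution on these examples.
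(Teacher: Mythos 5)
There is a genuine gap, and it is in the step you regard as ``classical'': the nonvanishing of the surface integral. In the invariant (\ref{int2}) the integrand involves the \emph{actual} scalar curvature of the metric under consideration, and for actual curvatures the Kazdan--Warner identity holds for \emph{every} metric conformal to the round one: since $g=e^{-\varphi}g_{\S^2}$ always satisfies (\ref{prescribedscalar}) with $h:=R_g$ (this is just the Gauss curvature equation), the identity (\ref{obstruction1}) gives $\int_{\S^2}(\nabla F)(R_g)\,d\mu_g=0$ for all such $g$; more generally Bourguignon--Ezin (\ref{BouguignonEzin}) gives the vanishing on any closed manifold, in dimension $2$ as well. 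The Kazdan--Warner ``obstruction'' constrains which candidate functions $h$ can be realized as curvatures; it does not produce conformal metrics whose realized curvature violates the identity. Consequently your final step cannot yield a nonzero number, and in fact your own Fubini reduction shows the whole construction must fail: for a suspension with compact fibers and product-type metric the leaf space is the smooth closed surface $(\Sigma,h)$, the transverse data are exactly the data of $\Sigma$, and
\begin{equation*}
I_Y(g)=\mathrm{Vol}(L,g_L)\int_\Sigma Y_\Sigma(R_h)\,d\mu_h=0
\end{equation*}
by the classical result. No example in which the foliation is a fibration over a closed surface (or, more generally, has leaf space a good $2$-orbifold) can produce a nonvanishing invariant.

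This is precisely why the paper's examples are of a different nature: they are the weighted Sasakian structures $(\S^3,\mathcal{F}_{\xi_a},g_a)$ on the $3$-sphere, whose characteristic foliation has leaf space the \emph{bad} orbifold ${\mathbb P}^1_{\C}(a_1,a_2)$ when $a_1/a_2$ is rational, and no manifold structure at all when it is irrational, so the classical vanishing theorem does not apply. There one exhibits the explicit transverse conformal field $Z_2$, computes the transverse scalar curvature of $g_a$, and evaluates the integral directly by the coarea formula, obtaining $I_{Z_2}=-8\pi^2(a_1^2-a_2^2)/(a_1^2a_2^2)$, which is nonzero whenever $a_1\neq a_2$. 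If you want to salvage your approach, you would have to replace the base surface by a bad $2$-orbifold (e.g.\ a teardrop or football) and work with a Seifert-type fibration rather than a genuine suspension over a manifold --- which is essentially what the weighted Sasakian spheres are.
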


The examples come from our study of 3-dimensional Sasaki-Ricci flow in \cite{Wang}.
For the Sasaki-Ricci flow see \cite{SWZ}.

However, for higher codimension ($q>2$), this invariant still vanishes, though we believe that in general
there exist obstructions for the transverse Yamabe problem.

When the leaves of a Riemannian foliation $(M,\mathcal{F},g_0)$ are all compact,
the leaf space with the induced metric from $g_0$ is a Riemannian orbifold.
Moreover, its scalar curvature is exactly the transverse scalar curvature. Hence,
in this case the transverse Yamabe problem is equivalent to the Yamabe problem on orbifolds,
which has been studied by Akutagawa-Botvinnik in \cite{AB1} and \cite{A}.
Very recently, Viaclovsky \cite{V1} gave interesting examples of $4$-dimensional orbifold,
on which the orbifold Yamabe problem has no solution.
It is an interesting and natural  question if one can find a similar obstruction for the orbifold Yamabe problem.
Our results can only provide an obstruction for 2-dimensional orbifolds.
The non-existence of solutions to the orbifold Yamabe problem on (bad) 2-dimensional orbifolds
follows from the result of Langfang Wu in \cite{Wu}.


The paper is organized as follows. In Section 2, we first provide preliminaries on Riemannian foliation
and the transverse Yamabe problem. Then we show the conformal invariance of (\ref{int2}).
In Section 3 we provide our examples of minimal Riemannian foliation of codimension 2 with nonvanishing (\ref{int2}).

We would like to thank Professor A. Futaki for his interest and telling us the reference \cite{Futaki2}.

\section{A conformal integral invariant}

In this section we show that on a minimal foliation $(M,\mathcal{F},g_0)$ with a transverse conformal vector field $Y$,
the integral (\ref{int2}) is independent of the choice of $g\in [g_0]_B$.
If in addition the codimension of the foliation is greater or equal to $3$,
we show that (\ref{int2}) is equal to zero for $g\in [g_0]_B$.

Let $(M,\mathcal{F})$ be a closed manifold with a foliation $\mathcal{F}$ of codimension $q$.
Let $\mathcal{L}$ denote the integrable subbundle given by $\mathcal{F}$ and $\nu(\mathcal{F})=TM/ \mathcal{L}$.
We denote the quotient map by $\pi: T M\rightarrow \nu(\mathcal{F})$.
Any Riemannian metric $g$ on $M$ provides a splitting of the exact sequence of bundles
$$0\rightarrow \mathcal{L}\rightarrow TM\rightarrow \nu(\mathcal{F})\rightarrow 0$$
and an isomorphism of bundles
$\sigma: \nu(\mathcal{F}) \rightarrow  \mathcal{L}^\perp $ satisfying $\pi\circ\sigma =id$.
The transverse metric $g^T$ corresponding to $(M,\mathcal{F},g)$ is defined by
$$g^T(s_1,s_2)=g(\sigma s_1,\sigma s_2), \quad s_1, s_2\in \Gamma \nu(\mathcal{F}).$$
The Riemannian metric $g$ is called bundle-like if the induced transverse metric $g^T$ is holonomy invariant, i.e.,
$L_\xi g^T=0$ for any $\xi\in \Gamma \mathcal{L}$. A foliation $(M,\mathcal{F})$ with a bundle-like metric $g$ is called a Riemannian foliation.
For the geometry of Riemannian foliations, see \cite{Tondeur}.

Let $(M,\mathcal{F},g)$ be a Riemannian foliation (i.e., $g$ is a bundle-like metric) and $\nabla^M$ be the Levi-Civita connection of $(M,g)$.
We denote the transverse Livi-Civita connection on $(\nu(\mathcal{F}), g^T)$ by $\nabla$.
This connection is defined by,
$$\nabla_X s:= \left\{
\begin{array}{lll}
\pi[X, Y_s], & \ds\vs \quad \hbox{ if } X\in \Gamma \mathcal{L} \\
\\ \pi(\nabla^M_X Y_s),& \quad \hbox{ if } X\in \Gamma\mathcal{L}^\perp,
\end{array}\right.$$
where $s\in \Gamma \nu(\mathcal{F})$ and $Y_s=\sigma s$.

 The transverse curvature operator is then defined by
$$R^T(X,Y)s=\nabla_X\nabla_Y s-\nabla_Y\nabla_X s-\nabla_{[X,Y]} s.$$
Let $\{e_i\}_{i=1}^q$ be a local orthonormal frame on $(\mathcal{L}^\perp, g|_{\mathcal{L}^\perp})$.
The transverse Ricci curvature and the transverse scalar curvature are defined respectively by
$$Ric^T(X,Y)=g^T(R^T(X,e_i)(\pi e_i),\pi Y)$$
and
$$R^T=Ric^T(e_j,e_j).$$
Both $Ric^T$ and $R^T$ are well-defined, i.e., they are independent of the choice of the local frame $\{e_i\}^q_{i=1}$.
Note that $R^T$ is basic, i.e. $\xi(R^T)=0$ for any $\xi\in \Gamma \mathcal{L}$.

We denote by $\tau$ the mean curvature vector field of leaves. That is
$$\tau=(\nabla^M_{\xi_\alpha}\xi_\alpha)^\perp,$$
where $\{\xi_\alpha\}$ is a local orthonormal frame of $\mathcal{L}$ and $X^\perp$ denotes the
projection of $X$ to $\mathcal{L}^\perp$.
A Riemannian foliation $(M,\mathcal{F},g)$ is said to be minimal if $\tau=0$.

Let $V(\mathcal{F})$ denote the space of all infinitesimal automorphisms of $\mathcal{F}$, i.e.,
$$V(\mathcal{F})=\{Y\in \Gamma(TM)|\quad L_Y \xi\in \Gamma\mathcal{L}, \quad \forall \xi \in \Gamma\mathcal{L}\}.$$
The space of transverse vector fields is defined by
$$\overline{V}(\mathcal{F})=\{\overline{Y}:=\pi (Y)| Y\in V(\mathcal{F})\}.$$
A function $f$ is called basic if $df(\xi)=0$ for any $\xi\in \Gamma \mathcal{L}$.
If a vector field $Y\in V(\mathcal{F})$ satisfies
\begin{equation}\label{eq2.1}
L_Yg^T=2f_Yg^T
\end{equation}
for a basic function $f_Y$ depending on $Y$,
we call $\overline{Y}$ (or $Y$) a transverse conformal field.
For a transverse conformal field $\overline{Y}$ we have $f_Y=\frac{1}{q} \div^{\nabla}(\overline{Y})$, where
$${\div}^\nabla(\overline{Y})=g^T(\nabla_{e_i}\overline{Y},\pi e_i)$$
is the transverse divergence of $\overline{Y}$ with respect to $\nabla$.
The following transverse divergence theorem can be found in \cite{Tondeur}. See also \cite{YorozuTanemura}.

\begin{lemma}
Let $(M,\mathcal{F},g)$ be a Riemannian foliation and $X\in V(\mathcal{F})$. Then
$$\int_M {\div}^\nabla (\overline{X})d\mu_g=\int_M g^T(\overline{X}, \pi \tau)d\mu_g.$$
\end{lemma}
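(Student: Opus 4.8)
The plan is to compare the transverse divergence $\div^\nabla(\overline{X})$ with the ordinary Riemannian divergence of the horizontal field $X^\perp=\sigma\overline{X}$, and to show that the discrepancy between the two is accounted for exactly by the mean curvature of the leaves. Since the ordinary divergence of any vector field integrates to zero on the closed manifold $M$, this discrepancy will reorganize into the asserted identity. First I would fix a point and choose a local orthonormal frame $\{\xi_\alpha\}$ of $\mathcal{L}$ together with the local orthonormal frame $\{e_i\}_{i=1}^q$ of $\mathcal{L}^\perp$, so that together they form an orthonormal frame of $TM$. Expanding the Riemannian divergence of $X^\perp$ in this frame,
$$\div^M(X^\perp)=\sum_\alpha g(\nabla^M_{\xi_\alpha}X^\perp,\xi_\alpha)+\sum_i g(\nabla^M_{e_i}X^\perp,e_i),$$
I would then treat the horizontal and vertical sums separately.

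For the horizontal sum I would invoke the defining formula for the transverse Levi-Civita connection in horizontal directions, $\nabla_{e_i}\overline{X}=\pi(\nabla^M_{e_i}X^\perp)$, together with the facts that $\sigma$ is a fibrewise isometry from $(\nu(\mathcal{F}),g^T)$ onto $(\mathcal{L}^\perp,g)$ and that $\sigma\circ\pi$ is the projection $(\cdot)^\perp$ onto $\mathcal{L}^\perp$. A short computation then gives $g^T(\nabla_{e_i}\overline{X},\pi e_i)=g(\nabla^M_{e_i}X^\perp,e_i)$ for each $i$, so that the horizontal sum is precisely $\div^\nabla(\overline{X})$. For the vertical sum, since $X^\perp$ is orthogonal to each $\xi_\alpha$, metric compatibility of $\nabla^M$ yields $g(\nabla^M_{\xi_\alpha}X^\perp,\xi_\alpha)=-g(X^\perp,\nabla^M_{\xi_\alpha}\xi_\alpha)$; summing over $\alpha$ and using that $X^\perp\in\Gamma\mathcal{L}^\perp$ projects the right-hand side onto the mean curvature, giving $-g(X^\perp,\tau)=-g^T(\overline{X},\pi\tau)$. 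Combining the two gives the pointwise identity
$$\div^M(X^\perp)=\div^\nabla(\overline{X})-g^T(\overline{X},\pi\tau).$$

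Finally I would integrate this identity over the closed oriented manifold $M$ against $d\mu_g$; the left-hand side vanishes by the classical divergence theorem, which immediately produces the stated formula. I do not expect a serious obstacle here: the proof is essentially a clean bookkeeping of the isometry $\sigma$ relating $g^T$-inner products of transverse fields to $g$-inner products of their horizontal lifts, and of the metric compatibility of $\nabla^M$. The only points demanding genuine care are the frame-independence of both $\div^\nabla(\overline{X})$ and $\tau$ (each being an honest trace, hence well defined) and the correct identification of the vertical term with the mean curvature vector. The hypothesis $X\in V(\mathcal{F})$ enters only to ensure that $\overline{X}=\pi(X)$ is a bona fide transverse vector field, so that $\div^\nabla(\overline{X})$ is meaningful; the pointwise identity above itself uses nothing beyond $X^\perp=\sigma\overline{X}$.
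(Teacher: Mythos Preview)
Your argument is correct: the pointwise identity $\div^M(X^\perp)=\div^\nabla(\overline{X})-g^T(\overline{X},\pi\tau)$ follows exactly as you describe, and integrating it over the closed manifold yields the stated formula. The paper itself does not supply a proof of this lemma but simply quotes it from the literature (Tondeur's book and Yorozu--Tanemura); what you have written is precisely the standard proof one finds in those references, so there is nothing to compare.
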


The basic Laplacian $\triangle_B$ acting on a basic function $u$ is defined by $$\triangle_Bu=
{\div}^\nabla(\nabla^M u)-\tau(u).$$

\begin{lemma}
Let $(M,\mathcal{F},g)$ be a Riemannian foliation of codimension $q$.
Assume that $Y$ is a transverse conformal vector field. Then
\begin{equation}\label{ScalarTransf}
\frac{q}{2}Y(R^T)=-(q-1)(\triangle_B+\tau) {\div}^\nabla(\overline{Y})-R^T {\div}^\nabla(\overline{Y}).
\end{equation}
\end{lemma}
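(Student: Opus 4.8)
The plan is to derive the identity (\ref{ScalarTransf}) by computing, under the pointwise conformal change $g^T_t = e^{2tf_Y}g_0^T$ generated by the flow of $Y$, the first variation of the transverse scalar curvature and then identifying it with $Y(R^T)$ via the Lie derivative. First I would recall the standard conformal transformation law for scalar curvature applied transversally: if $\tilde g^T = e^{2\phi}g^T$ with $\phi$ basic, then the transverse scalar curvatures on a foliation of codimension $q$ are related by
\begin{equation}
\tilde R^T = e^{-2\phi}\left( R^T - 2(q-1)\triangle_B \phi - (q-1)(q-2)g^T(\nabla^T\phi,\nabla^T\phi) \right),
\end{equation}
where $\triangle_B$ is the basic Laplacian and $\nabla^T$ the transverse gradient. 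The key point that makes this transversal version legitimate is that all quantities involved are basic (I would invoke that $R^T$ is basic, as noted in the excerpt, and that the basic Laplacian $\triangle_B u = \div^\nabla(\nabla^M u) - \tau(u)$ is the natural operator on basic functions), so the computation reduces to the classical Riemannian one performed on the transverse geometry, modulo mean-curvature correction terms that enter precisely through $\triangle_B$ and the transverse divergence theorem (Lemma 2.2).

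Next I would linearize. Writing $\phi = t f_Y$ and differentiating at $t = 0$, the quadratic term drops out and one gets $\left.\frac{d}{dt}\right|_{t=0}\tilde R^T = -2 f_Y R^T - 2(q-1)\triangle_B f_Y$. On the other hand, since $Y \in V(\mathcal F)$ and $L_Y g^T = 2 f_Y g^T$, the flow $\psi_t$ of (a representative of) $Y$ pulls the transverse metric back to a conformally rescaled one, so $\left.\frac{d}{dt}\right|_{t=0}\psi_t^* g^T = 2 f_Y g^T$, and by naturality of the curvature $\left.\frac{d}{dt}\right|_{t=0}\psi_t^* R^T = Y(R^T)$. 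Comparing the two expressions for the variation of $R^T$ under the rescaling gives $Y(R^T) = -2 f_Y R^T - 2(q-1)\triangle_B f_Y$. Finally I would substitute $f_Y = \frac{1}{q}\div^\nabla(\overline Y)$, obtaining
\begin{equation}
Y(R^T) = -\frac{2}{q} R^T \div^\nabla(\overline Y) - \frac{2(q-1)}{q}\triangle_B \div^\nabla(\overline Y),
\end{equation}
which upon multiplication by $q/2$ is exactly (\ref{ScalarTransf}) — except that the stated identity carries an extra $(q-1)\tau(\div^\nabla(\overline Y))$ term, which must come from the precise relationship between the Lie derivative $Y(R^T)$ along the full vector field $Y$ (not just its transverse part $\overline Y$) and the transverse conformal rescaling; the vertical component of $Y$ contributes a $\tau$-term through the foliated structure. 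So I would be careful to track the vertical part of $Y$ and use $L_Y\xi \in \Gamma\mathcal L$ together with the definition $\triangle_B + \tau$ acting on basic functions to collect that term correctly.

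The main obstacle I anticipate is getting the transverse conformal variation formula right in the foliated setting, specifically the bookkeeping of mean-curvature terms: the transverse Levi-Civita connection $\nabla$ is defined piecewise (differently along $\mathcal L$ and along $\mathcal L^\perp$), and the naive transversal analogue of the classical Yamabe-type computation picks up correction terms involving $\tau$ whenever one commutes transverse derivatives past the vertical distribution. Ensuring these assemble exactly into the operator $\triangle_B + \tau$ appearing on the right-hand side of (\ref{ScalarTransf}), rather than into some other combination, is the delicate part; I would handle it by working with basic functions throughout (so that $\tau(u)$ is the only surviving vertical contribution) and by invoking the identity $\triangle_B u = \div^\nabla(\nabla^M u) - \tau(u)$ as the bridge between the "transverse Laplacian" that appears naturally in the conformal computation and the basic Laplacian in the statement.
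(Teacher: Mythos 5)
The paper itself does not prove this lemma; it simply cites Jung--Jung and notes the classical version is due to Lichnerowicz. Your idea --- identify $Y(R^T)$ with the first variation of $R^T$ along the flow of $Y$, using that the flow preserves $\mathcal F$ and rescales $g^T$ conformally, then linearize the transverse conformal transformation law --- is a legitimate independent route (essentially Bourguignon's argument transplanted to the transverse setting). But as written there is a genuine gap, and your proposed way of closing it would fail. The transformation law you start from, $\tilde R^T=e^{-2\phi}\bigl(R^T-2(q-1)\triangle_B\phi-(q-1)(q-2)|\nabla^T\phi|^2\bigr)$, is not correct in the foliated setting: the operator that appears is the transverse Laplacian $\div^\nabla(\nabla^M\phi)=(\triangle_B+\tau)\phi$, not the basic Laplacian $\triangle_B\phi$; this is exactly the paper's formula (2.3), which carries the $\tau_0$ term. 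Because of this omission your linearization produces $Y(R^T)=-2f_YR^T-2(q-1)\triangle_Bf_Y$ and you are forced to hunt for the missing $(q-1)\tau(\div^\nabla\overline Y)$ elsewhere.

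The place you propose to find it --- the vertical component of $Y$ --- cannot supply it: $R^T$ is basic, so $\xi(R^T)=0$ for every $\xi\in\Gamma\mathcal L$ and hence $Y(R^T)$ depends only on the horizontal part of $Y$; tracking the vertical part of $Y$ contributes nothing. Likewise there is no delicate ``commuting transverse derivatives past the vertical distribution'': since $\triangle_Bu=\div^\nabla(\nabla^Mu)-\tau(u)$ by definition, the combination $\triangle_B+\tau$ in the statement is literally the transverse Laplacian, so the $\tau$-term is pure bookkeeping between these two operators. The repair is therefore simple: replace your transformation law by (2.3) (equivalently, run your argument with the transverse Laplacian throughout). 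Then $\left.\frac{d}{dt}\right|_{t=0}R^T[e^{c_t}g^T]=-2f_YR^T-2(q-1)(\triangle_B+\tau)f_Y$ with $\dot c_0=2f_Y$, the naturality/flow identification gives $Y(R^T)$ equal to this expression, and substituting $f_Y=\frac1q\div^\nabla(\overline Y)$ and multiplying by $q/2$ yields (\ref{ScalarTransf}) exactly, including the $\tau$-term and without any minimality assumption. One should also note, when invoking naturality, that the transverse curvature of $\psi_t^*g^T$ is the pullback of that of $g^T$ because $\psi_t$ is foliation-preserving and the transverse Levi--Civita connection depends only on the holonomy-invariant transverse metric; with that remark the corrected argument is complete.
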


\begin{proof}
For the proof of formula (\ref{ScalarTransf}) see for instance \cite{JungJung}.
The classical version of (\ref{ScalarTransf}) can be found in \cite{Lichnerowicz}.
\end{proof}

Let $(M, \mathcal{F},g_0)$ be a Riemannian foliation with mean curvature vector field $\tau_0$.
Recall that any $g\in [g_0]_B$, defined in (\ref{basicconformalclass}), induces a conformal transverse metric $g^T\in [g_0^T]_B$.
Let $g=g_0|_{\mathcal{L}}+e^u g_0|_{\mathcal{L}^\perp}$ be a metric in the basic conformal class $[g_0]_B$.
Then $(M, \mathcal{F},g)$ is a Riemannian foliation with mean curvature vector field $\tau=e^{-u}\tau_0$.
Hence the condition of  minimal foliation (i.e., $\tau=0$) is conformally invariant.
A direct computation gives the relationship of $R^T_{g^T}$ and $R^T_{g_0^T}$
\begin{equation}
\label{eq2.3}
R^T_{{g^T}}=e^{-u}[-(q-1)(\triangle_B^{0}+\tau_0)u-\frac{(q-1)(q-2)}{4}|\nabla u|_{g_0^T}^2+R^T_{g_0^T}].
\end{equation}
It follows that to solve the transverse Yamabe problem is equivalent to solve the following equation
\begin{equation}\label{eq2.2}
-(q-1)(\triangle_B^{0}+\tau_0)u-\frac{(q-1)(q-2)}{4}|\nabla u|_{g_0^T}^2+R^T_{g_0^T}=ce^u.
\end{equation}
When the foliation is minimal, i.e., $\tau_0=0$, the transverse Yamabe equation (\ref{eq2.2}) is the Euler-Lagrange equation of
the following functional
\begin{equation}\label{func1}
J(g)=\frac{\int_M R^T_{g^T} d\mu_{g}} {(vol (g))^{\frac {q-2}q}}, \quad \hbox{ if } q>2
\end{equation}
and
\begin{equation}\label{func2}
J_2(g)={\int_M (-\frac 12 u\Delta_B^0 u+u R^T_{g_0^T}) d\mu_{g_0}}-\int_M R^T_{g_0^T} d\mu_{g_0}\log \int_M e^u d\mu_{g_0} , \,\hbox{ if } q=2.
\end{equation}
Recall $g=g_0|_{\mathcal{L}}+e^u g_0|_{\mathcal{L}^\perp}$. Note that, when $q=2$, from (\ref{eq2.3}) it is easy to see
$\int_M R^T_{g^T} d\mu_{g}=\int_M R^T_{g_0^T} d\mu_{g_0}$ for any $g \in [g_0]_B$.

If $\tau_0\neq 0$, equation (\ref{eq2.2}) could be not a variational problem. Therefore we have first restricted ourself to the case of minimal Riemannian foliation in our study of the transverse Yamabe problem \cite{WZ3}. From now on we consider only minimal Riemannian foliations.
Assume that $Y$ is a transverse conformal vector field. We define
\begin{equation}\label{invariantdef}
I_{\overline{Y}}: [g_0]_B\rightarrow \R; \quad I_{\overline{Y}}({g})=\int_M Y(R^T_{{g^T}})d\mu_{{g}}.
\end{equation}

\begin{theorem}\label{invariant}
Let $(M, \mathcal{F},g_0)$ be a minimal Riemannian foliation and $Y$ be a transverse conformal vector field.
Then  $I_{\overline{Y}}({g})$ is independent of the choice of $g\in [g_0]_B$.
\end{theorem}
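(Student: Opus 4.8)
The plan is to prove that the derivative of $I_{\overline{Y}}$ along an arbitrary path in $[g_0]_B$ vanishes, so that $I_{\overline{Y}}$ is constant by connectedness of $[g_0]_B$. Fix $g\in[g_0]_B$ and a basic function $u$, and consider the path $g_s=g|_{\mathcal{L}}+e^{su}g|_{\mathcal{L}^\perp}$, $s\in\R$, in $[g_0]_B$, which passes through $g$ at $s=0$ and has transverse metric $g_s^T=e^{su}g^T$. Since any two metrics in $[g_0]_B$ are joined by such a segment and minimality is preserved under conformal change (as recalled above), it suffices to show $\frac{d}{ds}\big|_{s=0}I_{\overline{Y}}(g_s)=0$; the same identity applied with base point $g_s$ then shows that $s\mapsto I_{\overline{Y}}(g_s)$ is constant. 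Note also that $Y$ remains a transverse conformal vector field for every $g_s^T$, with $f_Y^{(s)}=f_Y+\frac{s}{2}Y(u)$, and that $Y(u)$ is basic because $[\xi,Y]\in\Gamma\mathcal{L}$ for all $\xi\in\Gamma\mathcal{L}$ (as $Y\in V(\mathcal{F})$) and $u$ is basic.

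To compute the first variation, note that $d\mu_{g_s}=e^{\frac{q}{2}su}\,d\mu_g$, so $\frac{d}{ds}\big|_{s=0}\,d\mu_{g_s}=\frac{q}{2}u\,d\mu_g$, while $(\ref{eq2.3})$ applied with base metric $g$ (so $\tau=0$) and conformal factor $su$ gives $\frac{d}{ds}\big|_{s=0}R^T_{g_s^T}=-uR^T-(q-1)\triangle_B u$, all quantities referring to $g$. Since $Y$ is independent of $s$, the product rule gives
\[
\frac{d}{ds}\Big|_{s=0}I_{\overline{Y}}(g_s)=\int_M Y\big(-uR^T-(q-1)\triangle_B u\big)\,d\mu_g+\frac{q}{2}\int_M u\,Y(R^T)\,d\mu_g.
\]

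Next I integrate by parts. Expanding $Y(uR^T)=Y(u)R^T+uY(R^T)$ and invoking the transverse divergence theorem above together with $\tau=0$, one obtains for every basic function $h$ the identities $\int_M Y(h)\,d\mu_g=-q\int_M h f_Y\,d\mu_g$ (apply the divergence theorem to $h\overline{Y}$, which is legitimate since $hY\in V(\mathcal{F})$) and $\int_M(\triangle_B f_Y)\,h\,d\mu_g=\int_M f_Y\,\triangle_B h\,d\mu_g$ (self-adjointness of $\triangle_B$ when $\tau=0$). Since $R^T$, $u$, $\triangle_B u$ and $f_Y$ are all basic, these apply to the terms above and collapse the first variation to
\[
\frac{d}{ds}\Big|_{s=0}I_{\overline{Y}}(g_s)=\int_M u\Big(qR^T f_Y+q(q-1)\triangle_B f_Y+\frac{q}{2}Y(R^T)\Big)\,d\mu_g.
\]
Substituting $(\ref{ScalarTransf})$ in the form $\frac{q}{2}Y(R^T)=-(q-1)\triangle_B\,\div^\nabla(\overline{Y})-R^T\,\div^\nabla(\overline{Y})$ (valid because $\tau=0$) and $\div^\nabla(\overline{Y})=q f_Y$, the integrand becomes identically zero, proving the theorem.

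The only delicate point is the transverse integration by parts: one must check that the vector fields fed to the transverse divergence theorem are products of a basic function with an element of $V(\mathcal{F})$, that the functions involved are genuinely basic, and---crucially---that minimality $\tau=0$ both annihilates the mean-curvature terms $\int_M g^T(\,\cdot\,,\pi\tau)\,d\mu_g$ and makes $\triangle_B$ self-adjoint. Without $\tau=0$ these terms survive, and the first variation need not vanish, in accordance with the remark following Theorem~\ref{mainthm}.
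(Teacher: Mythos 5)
Your proposal is correct and follows essentially the same route as the paper: compute the first variation of $I_{\overline{Y}}$ along the conformal path $e^{su}g^T$ using (\ref{eq2.3}) and $d\mu_{g_s}=e^{\frac q2 su}d\mu_g$, integrate by parts via the transverse divergence theorem with $\tau=0$, and cancel the resulting integrand by (\ref{ScalarTransf}) with $\div^\nabla(\overline Y)=qf_Y$. The only differences are cosmetic (you package the integration by parts as $\int_M Y(h)\,d\mu_g=-q\int_M hf_Y\,d\mu_g$ and self-adjointness of $\triangle_B$, and you verify a few basicness points the paper leaves implicit), so no further comment is needed.
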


\begin{proof}
Let $g^T$ be any transverse metric in $[g_0^T]_B$. Let $g^T(s)=e^{s\psi}g^T$, where $\psi$ is a basic function.
Let $g(s)\in [g_0]_B$ be the bundle-like metric inducing $g^T(s)$.
We denote by $R^T$ the transverse curvature of $g^T$.

It suffices to show that $\frac{d}{ds}|_{s=0}I_{\overline{Y}}(g(s))=0$.
Note that $(M, \mathcal{F},g_0)$ is a minimal Riemannian foliation, so is $(M, \mathcal{F},g)$.
Hence from (\ref{eq2.3}) we have
$$R^T_{g^T(s)}=e^{-s\psi}[-(q-1)s \triangle_B \psi-\frac{(q-1)(q-2)}{4}s^2|\nabla \psi|_{g^T}^2+R^T]$$
and
$$d\mu_{g(s)}=e^{\frac{qs\psi}{2}}d\mu_g.$$
It follows that
\begin{eqnarray*}
\frac{d}{d s}|_{s=0}I_{\overline{Y}}(g(s))&=&\int_M(Y[-\psi R^T-(q-1)\triangle_B \psi]+\frac{q}{2}\psi Y(R^T))d\mu_g
\\&=&\int_M[(\frac{q}{2}-1)\psi Y(R^T)-R^TY(\psi)-(q-1)Y \triangle_B\psi]d\mu_g.
\end{eqnarray*}
By the transverse divergence Theorem, we have
\begin{eqnarray*}
\int_M-R^TY(\psi)d\mu_g&=&-\int_M[{\div}^\nabla(\psi R^T \overline{Y})-\psi Y(R^T)-\psi R^T {\div}^\nabla (\overline{Y})]d\mu_g
\\&=&\int_M \psi[Y(R^T)+ R^T {\div}^\nabla (\overline{Y})]d\mu_g
\end{eqnarray*}
and
\begin{eqnarray*}
\int_M Y(\triangle_B\psi)d\mu_g&=&\int_M[{\div}^\nabla(\triangle_B\psi \overline{Y})-{\div}^\nabla(\overline{Y})\triangle_B\psi]d\mu_g
\\&=&-\int_M \psi\triangle_B( {\div}^\nabla \overline{Y}) d\mu_g.
\end{eqnarray*}
Hence we get
\begin{eqnarray*}
\frac{d}{d s}|_{s=0}I_{\overline{Y}}(g(s))=\int_M\psi[\frac{q}{2} Y(R^T)+R^T {\div}^\nabla (\overline{Y})+(q-1)\triangle_B( {\div}^\nabla \overline{Y})]d\mu_g.
\end{eqnarray*}
The proof is then completed by formula (\ref{ScalarTransf}).
\end{proof}

The proof given in \cite{Bourguignon} (see aslo \cite{Futaki2}) for the invariant of (\ref{int1}) would
provide another proof of this Theorem.
For foliation of higher codimension $q>2$, this invariant is still trivial.
\begin{theorem}
Let $(M,\mathcal{F},g_0)$ be a minimal Riemannian foliation of codimension $q\geq 3$
and $Y$ be a transverse conformal vector field.
Then for any $g\in [g_0]_B$, we have $I_{\overline{Y}}(g)=0$.
\end{theorem}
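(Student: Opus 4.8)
The plan is to use the variational structure of the problem in codimension $q \geq 3$ together with Theorem~\ref{invariant}. Since the invariant $I_{\overline{Y}}(g)$ is constant along $[g_0]_B$, it suffices to exhibit a single metric $g \in [g_0]_B$ at which the integral vanishes — or, more cleverly, to compute $I_{\overline{Y}}$ as the derivative of a natural functional along the flow generated by $\overline{Y}$ and show this derivative is zero. The key observation is that for $q > 2$ the transverse Yamabe equation is the Euler--Lagrange equation of the functional $J(g) = \int_M R^T_{g^T}\, d\mu_g / (\mathrm{vol}(g))^{(q-2)/q}$ from~(\ref{func1}). First I would let $\{\phi_t\}$ be the one-parameter family of transformations of $\nu(\mathcal{F})$ generated by the transverse conformal field $Y$, and pull back the transverse metric: $g^T_t := \phi_t^* g^T$. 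Since $Y$ is a transverse conformal field, $g^T_t = e^{u_t} g^T$ for a basic function $u_t$ with $u_0 = 0$ and $\dot u_0 = \frac{2}{q}\,\mathrm{div}^\nabla(\overline{Y})$, so $g^T_t$ stays in the basic conformal class $[g_0^T]_B$, hence corresponds to $g_t \in [g_0]_B$.

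Next I would argue that $J$ is invariant under this flow, since $J$ is built only from the intrinsic transverse Riemannian data and $\phi_t$ acts by transverse isometries up to the conformal factor which $J$ is designed to ignore; concretely $J(g_t) = J(g)$ for all $t$, so $\frac{d}{dt}\big|_{t=0} J(g_t) = 0$. Then I would compute this derivative directly. Differentiating the numerator gives a contribution from $\frac{d}{dt} R^T_{g^T_t}$ and from $\frac{d}{dt}\, d\mu_{g_t}$; using the first variation formula~(\ref{ScalarTransf}) for $Y(R^T)$ under the conformal change $\dot u_0 = \frac{2}{q}\mathrm{div}^\nabla(\overline{Y})$, and the formula $\frac{d}{dt}\big|_{t=0} d\mu_{g_t} = \frac{q}{2}\dot u_0\, d\mu_g = \mathrm{div}^\nabla(\overline{Y})\, d\mu_g$, together with the transverse divergence theorem on the minimal foliation (so that total divergences integrate to zero), the computation should collapse to a multiple of $\int_M Y(R^T_{g^T})\, d\mu_g = I_{\overline{Y}}(g)$ plus terms proportional to $\int_M R^T_{g^T}\,\mathrm{div}^\nabla(\overline{Y})\, d\mu_g$. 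The differentiation of the denominator $(\mathrm{vol}(g_t))^{(q-2)/q}$ contributes a term proportional to $\int_M R^T_{g^T}\, d\mu_g \cdot \int_M \mathrm{div}^\nabla(\overline{Y})\, d\mu_g$, and on a minimal foliation the second factor vanishes by Lemma~2 with $\tau = 0$. Collecting everything, $0 = \frac{d}{dt}\big|_{t=0} J(g_t) = c_q\, I_{\overline{Y}}(g) + (\text{terms that vanish on a minimal foliation})$ with $c_q \neq 0$ precisely when $q \neq 2$; this forces $I_{\overline{Y}}(g) = 0$.

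The main obstacle will be the bookkeeping in the variational computation: one must correctly identify which boundary/divergence terms integrate to zero — this is where minimality ($\tau = 0$) and the transverse divergence theorem (Lemma~1 and Lemma~2) are used — and one must track the exact coefficient $c_q$ multiplying $I_{\overline{Y}}(g)$ to see that it is nonzero exactly when $q \geq 3$, which explains why the argument breaks down at $q = 2$ and is consistent with Theorem~\ref{mainthm2}. An alternative route, avoiding the functional, is to fix the metric so that $R^T_{g^T}$ is as simple as possible; but since the transverse Yamabe problem need not be solvable, we cannot assume $R^T_{g^T}$ constant, so the variational argument via $J$ is the robust path. A third option, mirroring the Bourguignon argument mentioned after Theorem~\ref{invariant}, would invoke a transverse Obata-type rigidity result; I would not pursue that here since it requires extra input not developed in the excerpt.
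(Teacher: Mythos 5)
Your overall strategy (the Bourguignon-type argument via the Yamabe-type functional $J$ of (\ref{func1})) is the alternative route the paper itself alludes to after Theorem \ref{invariant}, but as written it has a genuine gap at its central step: the claim that $J(g_t)=J(g)$ along the flow of $Y$. In the classical setting this follows from naturality, $J(\phi_t^*g)=J(g)$, together with the fact that a conformal flow keeps $\phi_t^*g$ inside the conformal class. In the foliated setting both halves break down: your $g_t$ is obtained by pulling back only the transverse metric while freezing the leafwise metric and the splitting $\mathcal{L}^\perp$, so $g_t$ is \emph{not} $\phi_t^*g$ and is not isometric to $g$; conversely $\phi_t^*g$ is in general not in $[g_0]_B$, since the flow of $Y\in V(\mathcal{F})$ need not preserve $g_0|_{\mathcal{L}}$ or the orthogonal complement. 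Concretely, $d\mu_{g_t}=e^{qu_t/2}d\mu_g$ differs from $\phi_t^*d\mu_g$ by the uncontrolled leafwise volume distortion of the flow, and $J$ is certainly not ``designed to ignore'' conformal factors --- its critical points in the class are exactly the constant-$R^T$ metrics. Closing this gap requires comparing the full divergence of $Y$ with the transverse divergence ${\div}^\nabla(\overline{Y})$ and using minimality to discard the leafwise contribution; and if instead you try to establish the invariance by computing $\frac{d}{dt}\big|_{t=0}J(g_t)$ directly, that derivative is (up to a nonzero constant and after the same divergence-theorem manipulations) exactly $I_{\overline{Y}}(g)$ itself, so the argument becomes circular.

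The honest content of your plan reduces to one identity that the paper proves in two lines: integrate (\ref{ScalarTransf}) over $M$. Minimality ($\tau=0$) and the transverse divergence theorem give $\int_M \triangle_B\,{\div}^\nabla(\overline{Y})\,d\mu_g=0$ and $\int_M R^T{\div}^\nabla(\overline{Y})\,d\mu_g=-\int_M Y(R^T)\,d\mu_g$, whence $(\frac{q}{2}-1)\int_M Y(R^T_{g^T})\,d\mu_g=0$, and $q\geq 3$ forces $I_{\overline{Y}}(g)=0$ (no appeal to Theorem \ref{invariant} is even needed, since the identity holds for every $g\in[g_0]_B$). If you want to salvage the variational route, you must prove the invariance of $J$ under the flow by an independent argument (e.g.\ controlling the characteristic leafwise volume form on a minimal foliation), rather than asserting it from diffeomorphism invariance; otherwise the direct integration of (\ref{ScalarTransf}) is the correct and much shorter proof.
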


\begin{proof}
Let $g\in [g_0]_B$ which induces $g^T$.
Integrating (\ref{ScalarTransf}) and using the transverse divergence Theorem, we get
$$(\frac{q}{2}-1)\int_M Y(R^T_{g^T})d\mu_g=0.$$
Hence, $I=0$. \end{proof}

\section{Examples with non-vanishing $I_{Y}$}

In this section we compute the invariant, defined in (\ref{invariantdef}),
for a family of codimension $2$ minimal Riemannian foliations.
This family is given by the weighted Sasakian structures on the unit sphere in $\C^2$.
It is well-known in Sasakian geometry that a Sasakian manifold admits a minimal Riemannian foliation structure.
For the weighted Sasakian structures on the unit sphere in $\C^2$ one may refer to \cite{BoyerGalicki,Gauduchon,Wang}.

On the unit sphere $\S^3$ in $\C^2$ there is a canonical Sasakian structure described as below.
Let $$\eta=\sum_{i=1}^2(x^idy^i-y^idx^i)$$ be the canonical contact form on $\S^3$.
It uniquely determines a vector field $\xi$ by $\eta(\xi)=1$ and $i_\xi d\eta=0$.
The vector field $\xi$ is called the Reeb vector field and
$$\xi=\sum_{i=1}^2(x^i\frac{\partial}{\partial y^i}-y^i\frac{\partial}{\partial x^i}).$$
The Reeb vector field $\xi$ gives rise to a foliation $\mathcal{F}_\xi$ on $\S^3$.
The distribution $\mathcal{D}:=\text{ker}\, \eta$ is called the contact distribution.
Let $\phi$ be the linear map which satisfies $\phi\xi=0$
and $\phi|_{\mathcal{D}}$ is the restriction of the canonical complex structure of $\C^2$.
Let
\begin{equation}\label{g}
g=d\eta\circ(Id\otimes \phi)+\eta\otimes\eta.
\end{equation}
Then $g$ is a bundle-like metric for the foliation $\mathcal{F}_\xi$ and $\mathcal{D}=\mathcal{L}_\xi^\perp$,
where $\mathcal{L}_\xi$ is the integrable line bundle given by $\mathcal{F}_\xi$.
In fact $g$ is the standard metric on the unit sphere and $R^T=8$.

For a given pair $(a_1, a_2)$ of positive numbers, the weighted Sasakian structure $\S_a^3$
on the unit sphere $\S^3$ is defined as follows.
Let $$\eta_a=\sigma^{-1}\sum_{i=1}^2(x^idy^i-y^idx^i)=\sigma^{-1}\eta,$$ where $\sigma=a_1|z_1|^2+a_2|z_2|^2$.
The contact form $\eta_a$ uniquely determines the Reeb vector field $\xi_a$ by
$\eta_a(\xi_a)=1$ and $i_{\xi_a} d\eta_a=0$.
It is trivial to see that $\text{ker}\,\eta_a=\text{ker}\,\eta=\mathcal{D}$ and easy to check
$$\xi_a=\sum_{i=1}^2a_i(x^i\frac{\partial}{\partial y^i}-y^i\frac{\partial}{\partial x^i}).$$
The line bundle $\mathcal{L}_{\xi_a}$, spanned by the Reeb vector field $\xi_a$,
defines the characteristic foliation structure $\mathcal{F}_{\xi_a}$ of $\S^3_a$.
Let $\phi_a$ be the linear map which satisfies $\phi_a\xi_a=0$ and $\phi_a|_{\mathcal{D}}=\phi|_{\mathcal{D}}$;
and let
\begin{equation}\label{g2}
g_a=d\eta_a\circ(Id\otimes \phi_a)+\eta_a\otimes\eta_a.\end{equation}
Then $(\S^3,\mathcal{F}_{\xi_a},g_a)$ is a Sasakian manifold,
and hence its foliation is minimal. See \cite{Gauduchon}.

If $\frac{a_1}{a_2}(\neq 1)$ is a rational number, the leaf space of $\mathcal{F}_{\xi_a}$ is
the orbifold ${\mathbb P}_\C^1(a_1,a_2)$, i.e., the weighted projective line.
Any holonomy invariant transverse metric $g^T$ on $(\S^3,\mathcal{F}_{\xi_a})$
induces naturally an orbifold metric on $\mathbb{P}_{\C}^1(a_1,a_2)$
and the scalar curvature of the orbifold metric is exactly the transverse scalar curvature $R^T_{g^T}$.
It was known that $\mathbb{P}_{\C}^1(a_1,a_2)$ admits no orbifold metric of constant scalar curvature,
see \cite{Wu}. In \cite{GauduchonJDG} the Futaki invariant of $\mathbb{P}_{\C}^1(a_1,a_2)$ was shown to be non-vanishing.

If $\frac{a_1}{a_2}$ is an irrational number, the leaf space of $\mathcal{F}_{\xi_a}$ has no manifold structure.
In the following we show that the invariant $I$ is zero if and only if $a_1=a_2$, i.e., the leaf space is the
standard sphere. This phenomenon is very similar to the existence of $\S^1$-equivariant harmonic maps from
$\S^3$ into $\S^2$ in \cite{Wang0}.


We have the following two (real) tangent vector fields on $\S^3$:
$$Z_1=\sigma^{-1}(-i|z_2|^2z_1,i|z_1|^2z_2), \quad Z_2=\sigma^{-1}(|z_2|^2z_1,-|z_1|^2z_2),$$
where $z_1$ denotes the vector field $x^1\frac{\partial}{\partial x^1}+y^1\frac{\partial}{\partial y^1}$
and $iz_1$ the vector field $-y^1\frac{\partial}{ \partial x^1}+x^1\frac{\partial}{\partial y^1}$.
A direct computation gives
$$Z_i\in {\rm ker}\,\eta_a, \quad Z_2=\phi_a Z_1, \quad [\xi_a,Z_i]=0, \quad [Z_1,Z_2]=-2\sigma^{-3}|z_1|^2|z_2|^2\xi_a.$$

\begin{proposition}
The vector field $Z_2$ is a transverse conformal vector field on $(\S^3,\mathcal{F}_{\xi_a},g_a)$.
\end{proposition}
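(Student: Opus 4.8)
The plan is to verify the defining condition \eqref{eq2.1} directly, namely to show that $L_{Z_2} g_a^T = 2 f\, g_a^T$ for some basic function $f$, after first confirming that $Z_2 \in V(\mathcal{F}_{\xi_a})$. The membership $Z_2 \in V(\mathcal{F}_{\xi_a})$ is immediate from the bracket relation $[\xi_a, Z_2] = 0$ already recorded above, since then $L_{Z_2}\xi_a = -[\xi_a, Z_2] = 0 \in \Gamma\mathcal{L}_{\xi_a}$. For the conformal condition I would work with the transverse metric $g_a^T$, which on the contact distribution $\mathcal{D} = \ker\eta_a$ agrees with $g_a|_{\mathcal{D}} = d\eta_a \circ (Id \otimes \phi_a)$. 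A convenient basis for sections of $\nu(\mathcal{F}_{\xi_a}) \cong \mathcal{D}$ is $\{\pi Z_1, \pi Z_2\}$, so it suffices to compute $g_a^T(\pi Z_i, \pi Z_j)$ and then $(L_{Z_2} g_a^T)(\pi Z_i, \pi Z_j)$ for $i,j \in \{1,2\}$.

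First I would compute the components $g_a^T(\pi Z_i, \pi Z_j)$. Using $Z_2 = \phi_a Z_1$, $\phi_a^2 = -Id$ on $\mathcal{D}$, and the compatibility $g_a(\phi_a \cdot, \phi_a \cdot) = g_a(\cdot,\cdot)$ on $\mathcal{D}$ (which holds for a Sasakian structure), one gets $g_a^T(\pi Z_1,\pi Z_1) = g_a^T(\pi Z_2,\pi Z_2) =: h$ and $g_a^T(\pi Z_1, \pi Z_2) = 0$; the function $h$ can be read off from $d\eta_a$ applied to $(Z_1, \phi_a Z_1)$ and will come out to be a specific basic function of $|z_1|^2, |z_2|^2$ (a multiple of $\sigma^{-3}|z_1|^2|z_2|^2$, compatible with the bracket $[Z_1,Z_2] = -2\sigma^{-3}|z_1|^2|z_2|^2 \xi_a$). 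Then I would expand $(L_{Z_2} g_a^T)(\pi Z_i, \pi Z_j) = Z_2\big(g_a^T(\pi Z_i,\pi Z_j)\big) - g_a^T(\pi[Z_2,Z_i],\pi Z_j) - g_a^T(\pi Z_i, \pi[Z_2,Z_j])$, so I need the brackets $[Z_2, Z_1]$ and $[Z_2, Z_2] = 0$. Since $[Z_1,Z_2]$ is proportional to $\xi_a$, it lies in $\mathcal{L}_{\xi_a}$, hence $\pi[Z_2,Z_1] = 0$; therefore the Lie-derivative components reduce to $(L_{Z_2} g_a^T)(\pi Z_i,\pi Z_j) = Z_2(h)\,\delta_{ij}$ for $i,j\in\{1,2\}$. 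Comparing with $2f\,g_a^T(\pi Z_i,\pi Z_j) = 2f\,h\,\delta_{ij}$, the conformal equation holds with $f = \tfrac{1}{2} Z_2(\log h)$, and I would check that $Z_2(h)/h$ (equivalently $Z_2(\log h)$) is a basic function, i.e. annihilated by $\xi_a$; this follows since $h$ and $Z_2$ are both $\xi_a$-invariant ($\xi_a(h) = 0$ because $h$ is built from basic quantities, and $[\xi_a, Z_2] = 0$), so $\xi_a(Z_2(\log h)) = Z_2(\xi_a(\log h)) = 0$.

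The main obstacle I anticipate is the bookkeeping in computing $h = g_a^T(\pi Z_1,\pi Z_1)$ and $Z_2(h)$ explicitly in the real coordinates $(x^1,y^1,x^2,y^2)$ — keeping the $\sigma^{-1}$ factors, the constraint $\sum (x^i)^2 + (y^i)^2 = 1$ on $\S^3$, and the action of $\phi_a$ straight. To streamline this I would pass to the functions $t_i = |z_i|^2$ (with $t_1 + t_2 = 1$) and note that $Z_2$, being the "radial-type" real field $\sigma^{-1}(|z_2|^2 z_1, -|z_1|^2 z_2)$, acts on functions of $(t_1,t_2)$ in a simple way; everything then reduces to one-variable calculus in $t_1$. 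Once $f = \tfrac12 Z_2(\log h)$ is identified as a concrete basic function, the proposition is proved; this explicit $f$ (equivalently $\div^\nabla \overline{Z_2} = q f = 2f$) is exactly what feeds into formula \eqref{ScalarTransf} and the invariant \eqref{invariantdef} in the subsequent computation showing $I_{\overline{Z_2}} \neq 0$ precisely when $a_1 \neq a_2$.
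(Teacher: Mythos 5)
Your proposal is correct and follows essentially the same route as the paper: membership in $V(\mathcal{F}_{\xi_a})$ via $[\xi_a,Z_2]=0$, the computation $g_a^T(Z_i,Z_j)=h\,\delta_{ij}$ with $h$ a constant multiple of $\sigma^{-3}|z_1|^2|z_2|^2$, and then $L_{Z_2}g_a^T=2f\,g_a^T$ with $f=\tfrac12 Z_2(\log h)$, exactly the paper's $f_{Z_2}$. Your explicit reduction of the Lie derivative using $\pi[Z_2,Z_i]=0$ (since $[Z_1,Z_2]\in\Gamma\mathcal{L}_{\xi_a}$) just spells out the step the paper leaves as ``one can then verify.''
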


\begin{proof}
By the fact that $[\xi_a,Z_2]=0$, we see that $Z_2\in V(\mathcal{F}_{\xi_a})$.
Note that $g$, given by (\ref{g}), is the standard metric on $\S^3$ and $Z_1, Z_2\in \mathcal{D}$.
Then by the definition (\ref{g2}) of $g_a$, we have
$$g_a^T(Z_i,Z_j)=\sigma^{-1}g^T(Z_i,Z_j)=\sigma^{-1}g(Z_i,Z_j)=\sigma^{-3}|z_1|^2|z_2|^2\delta_{ij}.$$
One can then verify that
$L_{Z_2}g_a^T=2f_{Z_2}g_a^T$ with
$$ f_{Z_2}=\frac 12 Z_2\log (\sigma^{-3}|z_1|^2|z_2|^2).$$
\end{proof}

\begin{proposition} \label{transversescalar} The transverse scalar curvature of
$(S^3,\mathcal{F}_{\xi_a},g_a)$ is
\begin{eqnarray*}
R^T(g_a^T)=-24(a_1-a_2)^2\sigma^{-1}|z_1|^2|z_2|^2-16(a_1-a_2)(|z_1|^2-|z_2|^2)+8\sigma.
\end{eqnarray*}
\end{proposition}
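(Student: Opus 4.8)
The plan is to realize $g_a^T$ as a conformal change of a \emph{transversally flat} transverse metric, so that the conformal transformation law (\ref{eq2.3}) reduces the computation of $R^T(g_a^T)$ to that of a basic Laplacian. Set $f:=\sigma^{-3}|z_1|^2|z_2|^2$, so that by the previous proposition $g_a^T(Z_i,Z_j)=f\,\delta_{ij}$. On the dense, saturated open set $U:=\S^3\setminus\{z_1z_2=0\}$ the fields $Z_1,Z_2$ are nonvanishing, and the transverse metric $\widehat{g}^T:=f^{-1}g_a^T$ has $Z_1,Z_2$ as a transverse orthonormal frame; since $\xi_a(f)=0$, the function $\log f$ is basic, $\widehat{g}^T$ is holonomy invariant, and $g_a^T=e^{\log f}\widehat{g}^T$ lies in the basic conformal class of $\widehat{g}^T$ over $U$. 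Let $\widehat{g}$ denote a bundle-like metric on $U$ inducing $\widehat{g}^T$ with $\xi_a$ a unit normal. Since both sides of the claimed identity are smooth on all of $\S^3$ (recall $\sigma>0$), it suffices to prove it on $U$.

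The first step is to show $R^T_{\widehat{g}^T}=0$. In the frame $Z_1,Z_2$, metric compatibility of the transverse Levi-Civita connection $\nabla$ gives $\nabla_X(\pi Z_1)=\alpha(X)\,\pi Z_2$ and $\nabla_X(\pi Z_2)=-\alpha(X)\,\pi Z_1$ for some $1$-form $\alpha$, while torsion-freeness (inherited from $\nabla^M$, since $\nabla=\pi\circ\nabla^M$) gives $\nabla_{Z_1}(\pi Z_2)-\nabla_{Z_2}(\pi Z_1)=\pi[Z_1,Z_2]$. Because $[Z_1,Z_2]=-2\sigma^{-3}|z_1|^2|z_2|^2\,\xi_a\in\Gamma\mathcal{L}_{\xi_a}$, this right-hand side vanishes, forcing $\alpha(Z_1)=\alpha(Z_2)=0$, hence $\nabla_{Z_i}(\pi Z_j)=0$ for $i,j\in\{1,2\}$. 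Using $[\xi_a,Z_i]=0$, hence $\nabla_{\xi_a}(\pi Z_i)=\pi[\xi_a,Z_i]=0$, every term in
\[
R^T(Z_1,Z_2)(\pi Z_j)=\nabla_{Z_1}\nabla_{Z_2}(\pi Z_j)-\nabla_{Z_2}\nabla_{Z_1}(\pi Z_j)-\nabla_{[Z_1,Z_2]}(\pi Z_j)
\]
vanishes, so the transverse curvature is identically zero on $U$ and in particular $R^T_{\widehat{g}^T}=0$.

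Next, minimality is preserved within a basic conformal class, and $(\S^3,\mathcal{F}_{\xi_a},g_a)$ is Sasakian, hence minimal; so the foliation is minimal for $\widehat{g}$ as well, with vanishing mean curvature. Applying (\ref{eq2.3}) with $q=2$ (the gradient term then drops out) together with $R^T_{\widehat{g}^T}=0$ yields
\[
R^T_{g_a^T}=-f^{-1}\,\triangle_B^{\widehat{g}}(\log f).
\]
Moreover, since $\nabla$ annihilates $Z_1,Z_2$ and the $\widehat{g}$-gradient of a basic function $u$ is horizontal and equals $(Z_1u)Z_1+(Z_2u)Z_2$ (because $Z_1,Z_2,\xi_a$ is $\widehat{g}$-orthonormal), the basic Laplacian collapses to $\triangle_B^{\widehat{g}}u=Z_1Z_1u+Z_2Z_2u$.

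It remains to evaluate $(Z_1^2+Z_2^2)(\log f)$ with $\log f=-3\log\sigma+\log|z_1|^2+\log|z_2|^2$. Since $Z_1$ is a combination of $iz_1,iz_2$, it annihilates $|z_1|^2,|z_2|^2$ and $\sigma$, so $Z_1(\log f)=0$ and only the $Z_2$-term survives; a direct computation gives $Z_2(|z_1|^2)=2\sigma^{-1}|z_1|^2|z_2|^2$, $Z_2(|z_2|^2)=-2\sigma^{-1}|z_1|^2|z_2|^2$ and $Z_2(\sigma)=2(a_1-a_2)\sigma^{-1}|z_1|^2|z_2|^2$. Substituting, computing $Z_2Z_2(\log f)$, and multiplying by $-f^{-1}=-\sigma^{3}|z_1|^{-2}|z_2|^{-2}$ yields, after simplification, the stated expression, which then extends to all of $\S^3$ by continuity. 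The main obstacle is the second step — proving that $\widehat{g}^T$ is transversally flat — which forces one to handle the transverse connection carefully in spite of the non-integrability of the contact distribution $\mathcal{D}$; the concluding evaluation is routine bookkeeping.
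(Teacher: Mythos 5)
Your proposal is correct, and I checked that your final evaluation indeed simplifies to the stated expression (and gives $R^T=8$ when $a_1=a_2$, consistent with the round case). Your route differs from the paper's in how the curvature formula is obtained: the paper exploits the transverse holomorphic structure, setting $Z=Z_1-iZ_2$ (so $\phi Z=iZ$, $g_a^T(Z,\overline{Z})=2\sigma^{-3}|z_1|^2|z_2|^2$) and quoting the complex-frame identity $R^T=-2[g_a^T(Z,\overline{Z})]^{-1}Z\overline{Z}\log[g_a^T(Z,\overline{Z})]$, whereas you avoid that identity altogether by showing the rescaled transverse metric $f^{-1}g_a^T$ is transversally flat (a clean connection argument using $[Z_1,Z_2]\in\Gamma\mathcal{L}_{\xi_a}$ and $[\xi_a,Z_i]=0$, so the vertical bracket is killed by $\pi$) and then invoking the paper's own conformal transformation law (\ref{eq2.3}) with $q=2$ and $\tau=0$, which yields $R^T_{g_a^T}=-f^{-1}(Z_1Z_1+Z_2Z_2)\log f$. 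Since $Z\overline{Z}\log f=(Z_1^2+Z_2^2)\log f$ modulo a leafwise derivative of a basic function, the two formulas coincide, and the concluding computation is identical; your version is more self-contained and stays within the real Riemannian-foliation framework already set up in Section 2, at the cost of being longer, while the paper's is a one-line appeal to the transversally K\"ahler (Sasakian) structure. Two minor points of hygiene: your justification of torsion-freeness via ``$\nabla=\pi\circ\nabla^M$'' is only accurate for horizontal arguments (for $X\in\Gamma\mathcal{L}$ the connection is defined by Lie brackets), but you only use it for $Z_1,Z_2$, so nothing is lost; and the restriction to the saturated open set $U$ with extension by continuity is legitimate since both sides are smooth on all of $\S^3$.
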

\begin{proof}
Let $Z=Z_1-iZ_2$. We have $\phi Z=iZ$ and $g_a^T(Z,\overline{Z})=2\sigma^{-3} |z_1|^2|z_2|^2$,
where $\overline{Z}$ denotes the complex conjugate of $Z$.
The transversal scalar curvature follows from the following formula
$$R^T(g_a^T)=-2[g_a^T(Z,\overline{Z})]^{-1}Z\overline{Z}\log [g_a^T(Z,\overline{Z})].$$
We notice that the transverse scalar curvature of $\S_a^3$ was also given in \cite{Gauduchon} in a slightly different form.
\end{proof}

\begin{proposition} \label{invariantonweighted}
On the minimal Riemannian foliation $(\S^3,\mathcal{F}_{\xi_a},g_a)$, we have
$$I_{Z_2}(\widetilde{g})= -8\pi^2\frac{a_1^2-a_2^2}{a_1^2a_2^2}, \quad \forall \widetilde{g}\in [g_a]_B.$$
\end{proposition}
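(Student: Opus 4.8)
The plan is to reduce the statement, via Theorem~\ref{invariant}, to an elementary one-variable integral. Since $(\S^3,\mathcal{F}_{\xi_a},g_a)$ is a minimal Riemannian foliation and $Z_2$ is a transverse conformal vector field by the preceding proposition, $I_{Z_2}(\widetilde g)$ takes the same value for every $\widetilde g\in[g_a]_B$; hence it suffices to evaluate it at $\widetilde g=g_a$, i.e. to compute
\[
I_{Z_2}(g_a)=\int_{\S^3}Z_2\big(R^T(g_a^T)\big)\,d\mu_{g_a}.
\]

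First I would exploit the symmetry: on $\S^3$ every relevant quantity is a function of the single basic variable $u:=|z_1|^2$ (so $|z_2|^2=1-u$ and $\sigma=a_2+(a_1-a_2)u$); this applies to $R^T(g_a^T)$ by Proposition~\ref{transversescalar}. From $z_1(|z_1|^2)=2|z_1|^2$, $z_2(|z_2|^2)=2|z_2|^2$ and $z_1(|z_2|^2)=z_2(|z_1|^2)=0$ one gets $Z_2(u)=2\sigma^{-1}|z_1|^2|z_2|^2$, hence by the chain rule $Z_2\big(R^T(g_a^T)\big)=\frac{d}{du}R^T(g_a^T)\cdot 2\sigma^{-1}|z_1|^2|z_2|^2$, again a function of $u$ alone (consistently with the fact that $Z_2(R^T)$ is basic, since $[\xi_a,Z_2]=0$ and $\xi_a(R^T)=0$). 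Next I would record that $d\mu_{g_a}=\sigma^{-2}\,d\mu_g$, where $d\mu_g$ is the round volume form: this follows from $\eta_a=\sigma^{-1}\eta$ together with $\eta_a\wedge d\eta_a=\sigma^{-2}\,\eta\wedge d\eta$ and the fact that the volume form of a three-dimensional Sasakian manifold is a fixed multiple of $\eta\wedge d\eta$. In the coordinates $z_1=\sqrt u\,e^{i\theta_1}$, $z_2=\sqrt{1-u}\,e^{i\theta_2}$ one has $\eta=u\,d\theta_1+(1-u)\,d\theta_2$ and $d\mu_g=\tfrac12\,du\,d\theta_1\,d\theta_2$. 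Since the integrand $Z_2(R^T)\sigma^{-2}$ is basic, i.e. independent of $\theta_1,\theta_2$, integrating out the angles gives
\[
I_{Z_2}(g_a)=2\pi^2\int_0^1 Z_2\big(R^T(g_a^T)\big)\,\sigma^{-2}\,du=4\pi^2\int_0^1 \frac{d}{du}R^T(g_a^T)\;\frac{u(1-u)}{\sigma^{3}}\,du .
\]

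To finish I would evaluate this rational integral by the substitution $t=\sigma=a_2+(a_1-a_2)u$, optionally after an integration by parts (whose boundary terms vanish because $u(1-u)=0$ at $u=0$ and $u=1$): everything turns into $\frac{96\pi^2}{(a_1-a_2)^2}\int_{a_2}^{a_1}(\cdots)\,dt$, and after expanding the numerator the $t^{-1}$ and $t^{-2}$ contributions cancel identically, leaving only powers $t^{-3},t^{-4},t^{-5}$, which integrate in closed form to $\frac{96\pi^2}{(a_1-a_2)^2}\cdot\frac{-(a_1-a_2)^3(a_1+a_2)}{12\,a_1^2a_2^2}=-8\pi^2\frac{a_1^2-a_2^2}{a_1^2a_2^2}$. (As a variant, formula~(\ref{ScalarTransf}) with $q=2$ and $\tau=0$ gives $Z_2(R^T)=-\triangle_B({\div}^\nabla\overline{Z_2})-R^T\,{\div}^\nabla\overline{Z_2}$; since $\int_{\S^3}\triangle_B(\cdot)\,d\mu_{g_a}=0$ on a minimal foliation, the invariant equals $-\int_{\S^3}R^T\,{\div}^\nabla\overline{Z_2}\,d\mu_{g_a}$ with ${\div}^\nabla\overline{Z_2}=2f_{Z_2}$ explicit from the preceding proposition, and the same substitution then finishes the computation.)

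The only genuine difficulty here is arithmetic rather than conceptual: pinning down the normalization of $d\mu_{g_a}$ against the round metric, and carrying the polynomial bookkeeping through carefully so that the lower-order terms really do cancel. Everything else is formal manipulation, modulo the already-established invariance in Theorem~\ref{invariant}.
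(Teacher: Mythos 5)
Your proposal is correct and follows essentially the same route as the paper: reduce to the metric $g_a$ via Theorem \ref{invariant}, compute $Z_2(R^T_{g_a^T})$ from Proposition \ref{transversescalar} as a function of $|z_1|^2$, use $d\mu_{g_a}=\sigma^{-2}d\mu_g$, and evaluate the resulting one-variable integral via the substitution $s=\sigma$, arriving at the same expression $96\pi^2(a_1-a_2)^{-2}a_1a_2\int_{a_2}^{a_1}s^{-5}(s-a_1)(s-a_2)\,ds=-8\pi^2(a_1^2-a_2^2)/(a_1^2a_2^2)$. The only cosmetic difference is that you integrate out the torus angles in explicit coordinates where the paper invokes the coarea formula, with the same normalization $\int_{\S^3}f\,d\mu_g=2\pi^2\int_0^1 f\,dt$ for basic $f$.
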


\begin{proof}
By Theorem \ref{invariant}, it suffices to show that
$$\int_{S^3}Z_2(R^T_{g_a^T})d\mu_{g_a}=-8\pi^2\frac{a_1^2-a_2^2}{a_1^2a_2^2}.$$
It follows from Proposition \ref{transversescalar} that
$$Z_2(R^T_{g_a^T})=-48(a_1-a_2)a_1a_2\sigma^{-3}|z_1|^2|z_2|^2.$$
Let $t=|z_1|^2, s=(a_1-a_2)t+a_2=\sigma$. Then
$$Z_2(R^T_{g_a^T})=48 (a_1-a_2)^{-1} a_1 a_2 s^{-3}(s-a_1)(s-a_2).$$
Note that the volume element with respect to $g_a$ is
$$d\mu_{g_a}=\eta_a\wedge d\eta_a=\sigma^{-2}\eta\wedge d\eta=\sigma^{-2}d\mu_g,$$
where $d\mu_g$ is the standard volume element of $\S^3$.
For each $t=|z_1|^2\in (0,1)$, we have a torus $\{(z_1,z_2): |z_1|^2=t, |z_2|^2=1-t\}$ which has area
$(2\pi\sqrt{t})(2\pi\sqrt{1-t})$ with respect to the standard metric $g$ on $\S^3$.
Note that
$$\xi, \quad (|z_1| |z_2|)^{-1}\sigma Z_1, \quad(|z_1| |z_2|)^{-1}\sigma Z_2$$
form an orthonormal frame of $(\S^3,g)$. It is easy to see that $\xi|z_1|^2=Z_1|z_1|^2=0$.
It implies that $$|\nabla |z_1|^2|_g=(|z_1| |z_2|)^{-1} \sigma |Z_2(|z_1|^2)|=2|z_1| |z_2|.$$
Then by the coarea formula, we have
\begin{eqnarray*}\int_{\S^3} Z_2(R^T_{g_a^T})\eta_a\wedge d\eta_a
&=&\int_0^1 Z_2(R^T_{g_a^T}) \sigma^{-2}(2\pi\sqrt{t})(2\pi\sqrt{1-t})\frac{dt}{|\nabla |z_1|^2|_{g}}
\\&=&\int_0^1 Z_2(R^T_{g_a^T}) \sigma^{-2}2\pi^2dt
\\&=&\int_{a_2}^{a_1}96\pi^2(a_1-a_2)^{-2}a_1a_2 s^{-5}(s-a_1)(s-a_2)d s
\\&=&-8\pi^2\frac{a_1^2-a_2^2}{a_1^2a_2^2}.
\end{eqnarray*}
\end{proof}

Therefore the invariant vanishes if and only if $a_1=a_2$. This also gives the proof of Theorem \ref{mainthm2}.


\begin{thebibliography}{9}


\bibitem{A}Akutagawa, K., Computations of the orbifold Yamabe invariant, Math. Z., DOI: 10.1007/s00209-011-0880-0


\bibitem{AB1}
Akutagawa, K.; Botvinnik, B. The relative Yamabe invariant.  Comm. Anal. Geom.  \textbf{10}  (2002),  935-969


\bibitem{GauduchonJDG}
Apostolov, V.; Calderbank, D. M. J.; Gauduchon, P.; Tonnesen-Friedman, C. W.,
 Hamiltonian 2-forms in K\"ahler geometry. II. Global classification. J. Differential Geom. \textbf{68} (2004),  277--345


\bibitem{BM} Brendle, S.;  Marques, F. C., Recent progress on the Yamabe problem,
{\bf ArXiv:1010.4960}

\bibitem{Bourguignon}
Bourguignon, J. P., Invariants int\'egraux fonctionnels pour des \'{e}quations aux d\'eriv\'es partielles d'origine g\'eom\'etrique.
Lecture Notes in Math. 1209, 100--108, Springer, Berlin, (1986)

\bibitem{BourguignonEzin}
Bourguignon, J. P.; Ezin, J. P., Scalar curvature functions in a conformal class of metrics and conformal transformations.
Trans. Amer. Math. Soc., \textbf{ 301} (1987) 723--736

\bibitem{BoyerGalicki}
Boyer, C. P.; Galicki, K., Sasakian geometry. Oxford Mathematical Monographs. Oxford University Press, Oxford (2008)

\bibitem{Futaki}
Futaki, A., An obstruction to the existence of Einstein K\"ahler metrics. Invent. Math. \textbf{73} (1983) 437--443

\bibitem{Futaki2}
Futaki, A., K\"ahler-Einstein metrics and integral invariants. Lecture Notes in Mathematics 1314. Springer-Verlag, Berlin (1988)

\bibitem{Gauduchon}
Gauduchon, P.; Ornea, L., Locally conformally K\"ahler metrics on
Hopf surfaces. Ann. Inst. Fourier (Grenoble) \textbf{48} (1998) 1107--1127

\bibitem{HV}
Hebey, E.; Vaugon, M., Le proble de Yamabe \'equivariant.
Bull. Sci. Math. \textbf{117} (1993),  241--286.


\bibitem{JungJung}
Jung, M. J.; Jung, S. D., Riemannian foliations admitting transversal conformal fields. Geom. Dedicata \textbf{133} (2008), 155--168


\bibitem{Kazdan-Warner}
Kazdan, J. L.; Warner, F. W., Curvature functions for compact $2$-manifolds. Ann. of Math. \textbf{(2) 99} (1974) 14--47

\bibitem{Kazdan-Warner2}
Kazdan, J. L.; Warner, F. W., Scalar curvature and conformal deformation of Riemannian structure. J. Differential Geometry \textbf{10} (1975), 113--134

\bibitem{Lichnerowicz}
Lichnerowicz, A.: G\'{e}om\'{e}trie des groupes de transformations. (French) Travaux et Recherches Math\'{e}matiques, III. Dunod, Paris (1958)

\bibitem{Schoen}
Schoen, R., The existence of weak solutions with prescribed singular behavior for a conformally invariant scalar equation.
Comm. Pure Appl. Math. \textbf{41} (1988) 317--392

\bibitem{SWZ}Smoczyk, K.; Wang, G.; Zhang, Y. B.,
 The Sasaki-Ricci flow.  Internat. J. Math.  \textbf{21}  (2010),  951--969

\bibitem{Tondeur}
 Tondeur, P.,  Geometry of foliations. Monographs in Mathematics, 90. Birkh\"auser Verlag, Basel, 1997

\bibitem{V0}
 Viaclovsky, J.,  Conformal geometry and fully nonlinear equations.
 Inspired by S. S. Chern, 435--460, Nankai Tracts Math., 11, World Sci. Publ., Hackensack, NJ, 2006.

\bibitem{V1} Viaclovsky, J., Monopole metrics and the orbifold Yamabe problem.  to appear in Annales de L'Institut Fourier, {\bf arXiv:1002.2119}

\bibitem{Wang0} Wang, G.,  $\S^1$-invariant harmonic maps from $\S\sp 3$ to $\S\sp 2$.
Bull. London Math. Soc., \textbf{32}  (2000),  729--735

\bibitem{Wang}
Wang, G.; Zhang, Y. B., The Sasaki-Ricci flow on Sasakian $3$-spheres. Preprint.

\bibitem{WZ3} Wang, G.; Zhang, Y.B., On transverse Yamabe problem, in prepraration.

\bibitem{Wu} Wu, L. F., The Ricci flow on $2$2-orbifolds with positive curvature.  J. Differential Geom., \textbf{  33}  (1991),  575--596

\bibitem{YorozuTanemura}
Yorozu, S.; Tanemura, T., Green's theorem on a foliated Riemannian manifold and its applications. Acta Math. Hungar. \textbf{56} (1990)  239--245

\end{thebibliography}
\end{document}